\theoremstyle{plain}
\newtheorem{theorem}{Theorem}[section]
\newtheorem{lemma}[theorem]{Lemma}
\theoremstyle{definition}
\newtheorem{definition}{Definition}[section]
\theoremstyle{example}
\theoremstyle{remark}
\newlength{\defbaselineskip}
\def\ps@pprintTitle{
  \let\@oddhead\@empty
  \let\@evenhead\@empty
  \let\@oddfoot\@empty
  \let\@evenfoot\@oddfoot
}
\journal{Computer aided geometric design}
\begin{document}
\begin{frontmatter}
\title{HEMI-SLANT SUBMANIFOLDS OF COSYMPLECTIC MANIFOLDS}
\author[n1]{Mehraj Ahmad Lone\corref{cor1}}
\ead{mehraj.jmi@gmail.com}
\author[n2]{Mohamd Saleem Lone}
%\ead{jamali\_dbd@yahoo.co.in}
\author[n3]{Mohammad Hasan Shahid}
%\ead{hasan\_jmi@yahoo.com}
\address[n1]{Department of Mathematics, Central University of Jammu, Jammu, 180011, India.}
\address[n2]{Department of Mathematics, Central University of Jammu, Jammu, 180011, India.}
\address[n3]{Department of Mathematics, Jamia Millia Islamia, New Delhi-110 025, India}
\cortext[cor1]{Corresponding author}
\begin{abstract}
In this paper we study the hemi-slant submanifolds of cosymplectic manifolds. Necessary and sufficient conditions for distributions to be integrable are worked out. Some important results are obtained in this direction.
\end{abstract}
\begin{keyword}
\texttt Slant submanifolds, Cosymplectic manifolds, Hemi-slant submanifolds

2010 Mathematics Subject Classification:  53C15, 53C17, 53C10
\end{keyword}
\end{frontmatter}
\section{Introduction}
In 1990, B. Y. Chen introduced the notion of  slant submanifold, which generalizes holomorphic and totally real submanifolds \cite{biha1}.  After that many research articles have been published by different geometers in this direction for different ambient spaces.
\newline

A. Lotta introduced  the notion of slant immersions of a Riemannian manifolds into an almost contact metric manifolds \cite{biha3}. After, these submanifolds were studied by J. L.Cabrerizo \emph{et. al} in the setting of Sasakian manifolds \cite{biha7}. In \cite{biha8} Papaghiuc  defines the semi-slant submanifolds as a generilization of slant submanifolds . Bislant submanifolds of an almost Hermitian manifold were introduced as natural generalization of semi-slant submanifolds by Carriazo \cite{biha2}. One of the classes of bi-slant submanifolds is that of anti-slant submanifolds which are studied by A. Carriazo \cite{biha2} but the name anti-slant seems to refer that it has no slant factor, so B. Sahin \cite{biha4} give the name of hemi-slant submanifolds  instead of anti-slant submanifolds. In \cite{biha2} V. A. Khan and  M. A. Khan studied the hemi-slant submanifolds of sasakian manifolds.
\newline

In this paper we study the hemi-slant submanifolds of cosymplectic manifolds. In section $2$, we collect the basic formulae and definitions for a cosymplectic manifolds and their submanifolds for ready references. In section $3$, we study the  hemi-slant submanifolds of cosymplectic manifolds. We obtain the integrability conditions  of the distributions  which are involved in the definition.

\section{ Preliminaries }%\label{sec1.1}
Let ${N}$ be a $(2m+1)$-dimensional almost contact metric manifold with structure $(\phi,\xi, \eta, g)$ where $\phi$ is a tensor field of type $(1,1)$, $\xi$  a vector field, $\eta$ is a one form and $g$ is the Riemannian metric on $N$. Then they satisfy
\begin{eqnarray}\label{a1}
\phi^{2} = -I + \eta \otimes \xi,\hspace{.5cm} \eta(\xi) = 1, \hspace{.5cm}g(\phi X, \phi Y) = g(X,Y) - \eta(X)\eta(Y).
\end{eqnarray}
These conditions also imply that
\begin{eqnarray}\label{a2}
\phi{\xi}=0,\hspace{.5cm}\eta(\phi X) = 0, \hspace{.5cm}\eta(X) = g(X,\xi),
\end{eqnarray}
and
\begin{eqnarray}\label{a3}
g(\phi X, Y) + g(X, \phi Y) = 0,
\end{eqnarray}
for all vector fields $X, Y$ in $TN$. Where $TN$ denotes the Lie algebra of vector fields on $N$. A normal almost contact metric manifold is called a cosymplectic manifold if
\begin{eqnarray}\label{a4}
(\overline{\nabla}_{X}\phi) = 0,\hspace{.5cm} \overline{\nabla}_{X}\xi = 0,
\end{eqnarray}
where $\overline{\nabla}$ denotes the Levi-Civita connection of $(N,g)$.

Throughout, we denote by $N$ a cosymplectic manifold, $M$ a submanifold of $N$ and $\xi$ a  structure vector field tangent to $M$. $A$ and $h$ denotes the shape operator and second fundamental form of immersion of $M$ into $N$. If $\nabla$ is the induced connection on $M$, the Gauss and Weingarten formulae of $M$ into $N$ are then given respectively by
\begin{eqnarray}\label{a5}
\overline{\nabla}_{X}Y = \nabla_{X}Y + h(X,Y),
\end{eqnarray}
\begin{eqnarray}\label{a6}
\overline{\nabla}_{X}V = -A_{V}X + \nabla_{X}^{\perp}V,
\end{eqnarray}
for all vector fields $X, Y$ on $TM$ and $V$ on $T^{\perp}M$, where $\nabla^{\perp}$ denotes the connection on the normal bundle $T^{\perp}M$ of $M$. The shape operator and the second fundamental form are related by
\begin{eqnarray}\label{a7}
g(A_{V}X,Y) = g(h(X,Y),V).
\end{eqnarray}
The mean curvature vector is defined by
\begin{eqnarray}\label{a8}
H = \frac{1}{n} trace (h) = \frac{1}{n}\sum_{i=1}^{n} h(e_{i},e_{i}),
\end{eqnarray}
where $n$ is the dimension of $M$ and $\{e_{1},e_{2}, ... ,  e_{n}\}$ is the local orthonormal frame of $M$.

For any $X\in TM$, we can write
\begin{eqnarray}\label{a9}
\phi X = TX + FX,
\end{eqnarray}
where $TX$ and $FX$ are the tangential and normal components of $\phi X$ respectively.

Similarly for any $V\in T^{\perp}M$, we have
\begin{eqnarray}\label{a10}
\phi V = tV + fV,
\end{eqnarray}
where $tV$ and $fV$ are the tangential and normal components of $\phi V$ respectively.
%
%Now operating $\phi$ on [\ref{a9}] and [\ref{a10}] and using [\ref{a1}], we obtain the following,
%
%\begin{eqnarray}\label{a11}
%T^{2} = -I + \eta \otimes \xi - tN, \hspace{.5cm}NT +nN = 0,
%\end{eqnarray}
%and
%\begin{eqnarray}\label{a12}
%Tt + tn = 0,\hspace{.5cm} Nt + n^{2} = -I.
%\end{eqnarray}
The covariant derivative of the tensor fields $T$, $F$, $t$ and $f$ are defined by the following
\begin{eqnarray}\label{a13}
(\nabla_{X}T)Y = \nabla_{X}TY - T\nabla_{X}Y,
\end{eqnarray}
\begin{eqnarray}\label{a14}
(\nabla_{X}F)Y = \nabla_{X}^{\perp}FY - F\nabla_{X}Y,
\end{eqnarray}
\begin{eqnarray}\label{a15}
(\nabla_{X}t)V = \nabla_{X}tV - t\nabla_{X}^{\perp}V,
\end{eqnarray}
and
\begin{eqnarray}\label{a16}
(\nabla_{X}f)V = \nabla_{X}^{\perp}fV - f\nabla_{X}^{\perp}V,
\end{eqnarray}
for all $X$, $Y$ $\in$ $TM$ and $V \in T^{\perp}M$.

A submanifold $M$ of an almost contact metric manifold $N$ is said to be totally umbilical if
\begin{eqnarray}\label{a17}
h(X,Y) = g(X,Y)H,
\end{eqnarray}
where $H$ is the mean curvature vector. If $h(X,Y) = 0$ for any $X,Y \in TM$, then $M$ is said to be totally geodesic and  if $H = 0$, then $M$ is said to be a minimal submanifold.

A. Lotta has introduced the notion of slant immersion of a Riemannian manifold into an almost contact metric manifold \cite{biha3} and slant submanifolds in Sasakian manifolds have been studied by J.L. Cabrerizo et al. \cite{biha7}.

For any $x \in M$ and $X\in T_{x}M$ if the vectors $X$ and $\xi$ are linearly independent, the angle denoted by $\theta (X) \in [0,\frac{\pi}{2}]$ between $\phi X$ and $T_{x}M$ is well defined. If $\theta(X)$ does not depend on the choice of $x \in M$ and $X\in T_{x}M$, we say that $M$ is slant in $N$ .
The constant angle $\theta$ is then called the slant angle of $M$ in $N$ . The anti-invariant submanifold of an almost contact metric manifold is a slant submanifold with slant angle $\theta =\frac{\pi}{2}$ and an invariant submanifold is a slant submanifold with the slant angle $\theta = 0 $. If the slant angle $\theta$ of $M$ is different from 0 and $\frac{\pi}{2}$ , then it is called a proper slant submanifold. If $M$ is a slant submanifold of an almost contact manifold then the tangent bundle $TM$ of $M$ is decomposed as
\begin{eqnarray*}\label{a17}
TM = D \oplus \langle\xi\rangle,
\end{eqnarray*}
where $\langle\xi\rangle$ denotes the distribution spanned by the structure vector field $\xi$ and $D$ is a complementary distribution of $\langle\xi\rangle$ in $TM$, known as the slant distribution.
For a proper slant submanifold $M$ of an almost contact manifold $N$ with a slant angle $\theta$, Lotta \cite{biha3} proved that
\begin{eqnarray*}\label{a18}
T^{2}X =  -cos^{2}\theta (X  - \eta(X)\xi),\hspace{.7cm} \forall X \in TM.
\end{eqnarray*}
Cabrerizo et al. \cite{biha7} extended the above result into a characterization for a slant submanifold in a contact metric manifold. In fact, they obtained the following crucial theorems.
\begin{theorem}\cite{biha7}
Let $M$ be a slant submanifold of an almost contact metric manifold $N$ such that $\xi \in TM$. Then $M$ is slant submanifold if and only if there exist a constant $\lambda \in [0,1]$ such that
\begin{eqnarray*}\label{a19}
T^{2} = -\lambda(I - \eta \otimes\xi),
\end{eqnarray*}
furthermore, in such case, if $\theta$ is the slant angle of $M$, then $\lambda = cos^{2}\theta$.
\end{theorem}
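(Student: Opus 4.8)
The plan is to reduce the claim to a pointwise identity of quadratic forms and then promote it to an identity of $(1,1)$-tensors using the symmetry of $T^{2}$. First I would record a few elementary facts. Taking tangential parts in \eqref{a3} for $X,Y\in TM$ gives $g(TX,Y)=-g(X,TY)$, so $T$ is skew-symmetric on $TM$; hence $T^{2}$ is symmetric and $g(T^{2}X,X)=-g(TX,TX)=-|TX|^{2}$. By \eqref{a1} and \eqref{a2}, $|\phi X|^{2}=g(X,X)-\eta(X)^{2}$, which is strictly positive whenever $X$ and $\xi$ are linearly independent (Cauchy--Schwarz, since $|\xi|=1$). For such $X$, the angle $\theta(X)$ between $\phi X$ and $T_{x}M$ has cosine $|TX|/|\phi X|$, because $TX$ is exactly the orthogonal projection of $\phi X$ onto $T_{x}M$ by \eqref{a9}; therefore
\begin{eqnarray*}
\cos^{2}\theta(X)=\frac{|TX|^{2}}{|\phi X|^{2}}=\frac{-g(T^{2}X,X)}{g(X,X)-\eta(X)^{2}}.
\end{eqnarray*}

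For the forward implication, assume $M$ is slant with slant angle $\theta$. The displayed identity gives $g(T^{2}X,X)=g\big(-\cos^{2}\theta\,(I-\eta\otimes\xi)X,\,X\big)$ for every $X\in TM$ independent from $\xi$, and both sides vanish along $\xi$ since $T\xi=0$ (from \eqref{a2}) and $(I-\eta\otimes\xi)\xi=0$. As every tangent vector is either proportional to $\xi$ or independent from it, the quadratic forms of the symmetric operators $T^{2}$ and $-\cos^{2}\theta\,(I-\eta\otimes\xi)$ coincide on all of $TM$, so by polarization and nondegeneracy of $g$ we get $T^{2}=-\cos^{2}\theta\,(I-\eta\otimes\xi)$. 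Setting $\lambda=\cos^{2}\theta$, which lies in $[0,1]$ because $\theta\in[0,\tfrac{\pi}{2}]$, finishes this direction.

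Conversely, suppose $T^{2}=-\lambda(I-\eta\otimes\xi)$ for a constant $\lambda\in[0,1]$. Then for any $x\in M$ and any $X\in T_{x}M$ independent from $\xi$,
\begin{eqnarray*}
\cos^{2}\theta(X)=\frac{-g(T^{2}X,X)}{g(X,X)-\eta(X)^{2}}=\frac{\lambda\big(g(X,X)-\eta(X)^{2}\big)}{g(X,X)-\eta(X)^{2}}=\lambda ,
\end{eqnarray*}
which depends on neither $x$ nor $X$. Hence $\theta(X)\equiv\arccos\sqrt{\lambda}$ is constant, $M$ is slant, and comparison with the forward part shows the slant angle $\theta$ satisfies $\lambda=\cos^{2}\theta$.

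I expect the only step needing genuine care to be the passage from the scalar identity $g(T^{2}X,X)=g(SX,X)$ to the tensor identity $T^{2}=S$, where $S=-\lambda(I-\eta\otimes\xi)$: this uses symmetry of both operators together with nondegeneracy of $g$ on $TM$, and one must handle the $\xi$-direction---excluded in the very definition of $\theta(X)$---separately. The remaining computations are routine consequences of \eqref{a1}, \eqref{a2}, \eqref{a3} and the decomposition \eqref{a9}.
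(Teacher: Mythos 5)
The paper does not actually prove this statement: it is quoted verbatim from Cabrerizo et al.\ \cite{biha7} as a known characterization, so there is no in-paper argument to compare yours against. Judged on its own, your proof is correct and is essentially the standard argument from that reference: the identities $g(TX,Y)=-g(X,TY)$ (hence symmetry of $T^{2}$), $|\phi X|^{2}=g(X,X)-\eta(X)^{2}$, and $\cos\theta(X)=|TX|/|\phi X|$ are all valid consequences of (\ref{a1})--(\ref{a3}) and (\ref{a9}), and you correctly identify the one nontrivial step --- upgrading the pointwise equality of quadratic forms $g(T^{2}X,X)=g(SX,X)$ to the operator identity $T^{2}=S$ via symmetry of both operators, polarization, and nondegeneracy of $g$, with the $\xi$-direction (where $\theta(X)$ is undefined) handled separately through $T\xi=0$ and $(I-\eta\otimes\xi)\xi=0$. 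The converse direction is also complete, since constancy of $\cos^{2}\theta(X)=\lambda$ over all admissible $X$ and all points is exactly the definition of slantness. No gaps.
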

\begin{theorem}\cite{biha7}
Let $M$ be a slant submanifold of an almost contact metric manifold $\overline {M}$ with slant angle $\theta$. Then for any $X,Y \in TM$, we have
\begin{eqnarray*}\label{a20}
g(TX,TY) = cos^{2}\theta\{g(X,Y)  - \eta(X)\eta(Y)\},
\end{eqnarray*}
and
\begin{eqnarray*}\label{a21}
g(FX,FY) = sin^{2}\theta\{g(X,Y)  - \eta(X)\eta(Y)\}.
\end{eqnarray*}
\end{theorem}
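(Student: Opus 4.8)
The plan is to derive both identities from the algebraic characterization in the preceding theorem, namely $T^{2} = -\cos^{2}\theta\,(I - \eta\otimes\xi)$, together with the compatibility relations \eqref{a1} and \eqref{a3}. No use of the cosymplectic condition \eqref{a4} is needed, since the statement is pointwise and purely algebraic, valid for any slant submanifold of an almost contact metric manifold.

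First I would record that $T$ is skew-symmetric with respect to $g$. For $X,Y \in TM$ write $\phi X = TX + FX$ and $\phi Y = TY + FY$ as in \eqref{a9}, with $FX, FY \in T^{\perp}M$. Then $g(\phi X, Y) = g(TX, Y)$ and $g(X, \phi Y) = g(X, TY)$ because the normal parts contribute nothing when paired with a tangent vector, so \eqref{a3} yields $g(TX, Y) = -g(X, TY)$. Combining this with the preceding theorem and with $\eta(Y) = g(\xi, Y)$ from \eqref{a2},
\begin{eqnarray*}
g(TX, TY) = -g(T^{2}X, Y) = \cos^{2}\theta\, g\big(X - \eta(X)\xi,\ Y\big) = \cos^{2}\theta\{g(X,Y) - \eta(X)\eta(Y)\},
\end{eqnarray*}
which is the first formula.

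For the second, decompose $g(\phi X, \phi Y)$ using \eqref{a9}: since $TX, TY$ are tangent while $FX, FY$ are normal, the cross terms vanish and $g(\phi X, \phi Y) = g(TX, TY) + g(FX, FY)$. On the other hand, \eqref{a1} gives $g(\phi X, \phi Y) = g(X,Y) - \eta(X)\eta(Y)$. Subtracting the first formula just obtained,
\begin{eqnarray*}
g(FX, FY) = \big(1 - \cos^{2}\theta\big)\{g(X,Y) - \eta(X)\eta(Y)\} = \sin^{2}\theta\{g(X,Y) - \eta(X)\eta(Y)\}.
\end{eqnarray*}

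The argument is entirely routine. The only points that need attention are the skew-symmetry of $T$ and the orthogonality of the tangential and normal components in the splitting of $g(\phi X, \phi Y)$, both of which follow immediately from the structure equations already listed; consequently I do not expect any genuine obstacle.
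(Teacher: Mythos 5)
Your proof is correct. The paper itself states this theorem only as a cited result from \cite{biha7} and gives no proof, so there is nothing internal to compare against; your derivation --- skew-symmetry of $T$ from \eqref{a3}, the identity $g(TX,TY)=-g(T^{2}X,Y)$ combined with $T^{2}=-\cos^{2}\theta(I-\eta\otimes\xi)$, and then subtracting from $g(\phi X,\phi Y)=g(X,Y)-\eta(X)\eta(Y)$ to get the $F$-formula --- is exactly the standard argument of Cabrerizo et al., and every step checks out.
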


\section{Hemi-slant submanifolds of cosymplectic manifolds}
In the present section, we introduce the hemi-slant submanifolds and obtain the necessary and sufficient conditions for the distributions of hemi-slant submanifolds of cosymplectic manifolds to be integrable.
\begin{definition}
Let $M$ be submanifold of an almost contact metric manifold $N$, then $M$ is said to be a hemi-slant submanifold if there exist two orthogonal distributions $D^{\theta}$ and $D^{\perp}$ on $M$ such that

(i) $TM$ = $D^{\theta}\oplus D^{\perp}$ $\oplus$  $\langle\xi\rangle $

(ii) $D^{\theta}$ is a slant distribution with slant angle $\theta \ne \frac{\pi}{2}$,

(iii) $D^{\perp}$ is a totally real,that is $JD^{\perp}\subseteq T^{\perp}M$,

it is clear from above that CR-submanifolds and slant submanifolds are hemi-slant submanifolds with slant angle $\theta = \frac{\pi}{2}$ and $D^{\theta}$ = {0}, respectively.
\end{definition}

In the rest of this paper, we use $M$ a hemi-slant submanifold of almost contact metric manifold $N$.

On the other hand, if we denote the dimensions of the distributions $D^{\perp}$ and $D^{\theta}$ by $m_{1}$ and $m_{2}$ respectively, then we have the following cases:

(1) If  $m_{2} = 0$, then $M$ is anti-invariant  submanifold,

(2) If $m_{1}=0$ and $\theta = 0$, then $M$ is an invariant submanifold,

(3) If $m_{1}=0$ and $\theta \ne 0$, then $M$ is a proper slant submanifold with slant angle $\theta$,

(4) if $m_{1}, m_{2} \ne 0$ and $\theta \in (0, \frac{\pi}{2})$, then $M$ is a proper hemi-slant submanifold.
\newline

Suppose $M$ to be a hemi-slant submanifold of an almost contact metric manifold $N$, then for any $X \in TM$, we put
\begin{eqnarray}\label{h1}
X = P_{1}X + P_{2}X + \eta(X)\xi,
\end{eqnarray}
where $P_{1}$ and $P_{2}$ are projection maps on the distribution $D^{\perp}$ and $D^{\theta}$. Now operating $\phi$ on both sides of (\ref{h1}), we arrive at
\begin{eqnarray*}
\phi X = \phi P_{1}X + \phi P_{2}X + \eta(X)\phi\xi,
\end{eqnarray*}
Operating $\phi$ on both sides, we get
\begin{eqnarray*}
 TX + FX = FP_{1}X + T P_{2}X + F P_{2}X,
\end{eqnarray*}
It is easy to see on comparing that
\begin{eqnarray*}
 TX = T P_{2}X, \hspace{1cm}  FX = FP_{1}X + F P_{2}X,
\end{eqnarray*}
If we denote the orthogonal complement of $\phi TM$ in $T^{\perp}M$ by $\mu$, then the normal bundle $T^{\perp} M$ can be decomposed as
\begin{eqnarray}\label{h2}
T^{\perp}M = F(D^{\perp})\oplus F(D^{\theta})\oplus \mu.
\end{eqnarray}
As $N(D^{\perp})$ and $N(D^{\theta})$ are orthogonal distributions on $F$ . g(Z,W) = 0 for each $Z \in D^{\perp}$ and $W \in D^{\theta}$. Thus, by (\ref{a1}),(\ref{a3}) and (\ref{a9}), we have
\begin{eqnarray}
g(FZ, FX) = g(\phi Z, \phi X) = g(Z,X) = 0,
\end{eqnarray}
which shows that the distributions $F(D^{\perp})$ and $F(D^{\theta})$ are mutually perpendicular. In fact, the decomposition (\ref{h2}) is an orthogonal direct decomposition.
\begin{lemma}
Let $M$ be a hemi-slant submanifolds of a cosymplectic manifold $N$. Then we have
\begin{eqnarray*}
\nabla_{X}TY - A_{FY}X = T\nabla_{X}Y + th(X,Y)
\end{eqnarray*}
and
\begin{eqnarray*}
h(X,TY) + \nabla_{X}^{\perp}FY = F\nabla_{X}Y + fh(X,Y)
\end{eqnarray*}
for all $X, Y$ $\in$ $TM$.
\end{lemma}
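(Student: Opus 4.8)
The plan is to extract both identities simultaneously from the single vector equation obtained by computing $\overline{\nabla}_X(\phi Y)$ in two different ways and then splitting into tangential and normal components.

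First I would use the cosymplectic hypothesis. Since $(\overline{\nabla}_X\phi)=0$ in (\ref{a4}), we have $\overline{\nabla}_X(\phi Y)=\phi(\overline{\nabla}_X Y)$ for all $X,Y\in TM$. On the left-hand side, substitute the decomposition $\phi Y = TY+FY$ from (\ref{a9}); then $TY$ is tangent to $M$ and $FY$ is normal, so applying the Gauss formula (\ref{a5}) to $\overline{\nabla}_X TY$ and the Weingarten formula (\ref{a6}) to $\overline{\nabla}_X FY$ gives
\begin{eqnarray*}
\overline{\nabla}_X(\phi Y) = \nabla_X TY + h(X,TY) - A_{FY}X + \nabla_X^{\perp}FY.
\end{eqnarray*}
On the right-hand side, apply the Gauss formula to $\overline{\nabla}_X Y$ and then split $\phi$ using (\ref{a9}) on the tangential part $\nabla_X Y$ and (\ref{a10}) on the normal part $h(X,Y)$:
\begin{eqnarray*}
\phi(\overline{\nabla}_X Y) = \phi\nabla_X Y + \phi h(X,Y) = T\nabla_X Y + F\nabla_X Y + th(X,Y) + fh(X,Y).
\end{eqnarray*}

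Equating the two expressions yields one identity in $TN = TM \oplus T^{\perp}M$. The last step is to compare components: the terms $\nabla_X TY$, $-A_{FY}X$, $T\nabla_X Y$, $th(X,Y)$ all lie in $TM$, while $h(X,TY)$, $\nabla_X^{\perp}FY$, $F\nabla_X Y$, $fh(X,Y)$ all lie in $T^{\perp}M$. Reading off the tangential part gives $\nabla_X TY - A_{FY}X = T\nabla_X Y + th(X,Y)$, and the normal part gives $h(X,TY) + \nabla_X^{\perp}FY = F\nabla_X Y + fh(X,Y)$, which are the two claimed equations.

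There is no real obstacle here beyond careful bookkeeping: the only thing to be attentive about is which summand is tangent and which is normal (in particular that $A_{FY}X$ is tangential and $h(X,TY)$ is normal), and that the cosymplectic condition is what lets us pass $\phi$ through $\overline{\nabla}_X$ without extra $\eta\otimes\xi$ correction terms. One may optionally remark that the same computation specializes correctly when $Y=\xi$, since then $TY=0$, $FY=0$, and $\overline{\nabla}_X\xi=0$.
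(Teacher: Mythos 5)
Your proof is correct and is exactly the standard computation: the paper itself states this lemma without proof, but the identical derivation (apply $(\overline{\nabla}_X\phi)Y=0$, expand $\phi Y=TY+FY$ via Gauss--Weingarten, expand $\phi\overline{\nabla}_XY$ via (\ref{a9})--(\ref{a10}), and separate tangential from normal parts) is carried out explicitly inside the proofs of the later integrability theorems, so your approach coincides with the paper's. One small point in your favor: you correctly write the normal component of $\phi h(X,Y)$ as $fh(X,Y)$, whereas the paper's analogous displays contain the typo $nh(X,Y)$.
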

\begin{lemma}
Let $M$ be a hemi-slant submanifolds of a cosymplectic manifold $N$. Then we have
\begin{eqnarray*}
\nabla_{X}tV - A_{FV}X = - T A_{V}X + t\nabla_{X}^{\perp}V
\end{eqnarray*}
and
\begin{eqnarray*}
h(X,tV) + \nabla_{X}^{\perp}FV = - f A_{V}Y + f\nabla_{V}^{\perp}V.
\end{eqnarray*}
for all $X$ $\in$ $TM$ and $V \in T^{\perp}M$.
\end{lemma}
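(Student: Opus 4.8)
The plan is to compute $\overline{\nabla}_{X}(\phi V)$ in two different ways for $X\in TM$ and $V\in T^{\perp}M$, and then compare tangential and normal components. The single structural input is the cosymplectic condition \eqref{a4}, which makes $\phi$ parallel and hence lets us commute it past $\overline{\nabla}$.

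First I would expand the left-hand side through the decomposition \eqref{a10}, $\phi V=tV+fV$. Since $tV\in TM$, the Gauss formula \eqref{a5} gives $\overline{\nabla}_{X}tV=\nabla_{X}tV+h(X,tV)$; since $fV\in T^{\perp}M$, the Weingarten formula \eqref{a6} gives $\overline{\nabla}_{X}fV=-A_{fV}X+\nabla_{X}^{\perp}fV$. Adding,
\[
\overline{\nabla}_{X}(\phi V)=\bigl(\nabla_{X}tV-A_{fV}X\bigr)+\bigl(h(X,tV)+\nabla_{X}^{\perp}fV\bigr),
\]
where the first bracket is tangent to $M$ and the second is normal.

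Next, because $N$ is cosymplectic we have $(\overline{\nabla}_{X}\phi)=0$, so $\overline{\nabla}_{X}(\phi V)=\phi(\overline{\nabla}_{X}V)$. Applying Weingarten to $\overline{\nabla}_{X}V=-A_{V}X+\nabla_{X}^{\perp}V$ and then decomposing $A_{V}X\in TM$ by \eqref{a9} and $\nabla_{X}^{\perp}V\in T^{\perp}M$ by \eqref{a10}, I get
\[
\phi(\overline{\nabla}_{X}V)=\bigl(-TA_{V}X+t\nabla_{X}^{\perp}V\bigr)+\bigl(-FA_{V}X+f\nabla_{X}^{\perp}V\bigr),
\]
again split into its tangential and normal parts.

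Equating the two expressions and separating components then yields $\nabla_{X}tV-A_{fV}X=-TA_{V}X+t\nabla_{X}^{\perp}V$ from the tangential parts and $h(X,tV)+\nabla_{X}^{\perp}fV=-FA_{V}X+f\nabla_{X}^{\perp}V$ from the normal parts, which are the two claimed identities (up to the evident notational normalization $FV\mapsto fV$, $f\nabla_{V}^{\perp}V\mapsto f\nabla_{X}^{\perp}V$ in the displayed statement). There is no real obstacle here; the only point requiring care is to keep the operators $T,F$ (acting on $TM$) and $t,f$ (acting on $T^{\perp}M$) straight and to match the tangential and normal pieces on the two sides correctly. This mirrors exactly the derivation of the previous lemma, with the roles of $TM$ and $T^{\perp}M$ interchanged.
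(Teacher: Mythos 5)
Your proposal is correct: computing $\overline{\nabla}_{X}(\phi V)$ once via the decomposition $\phi V = tV + fV$ with the Gauss and Weingarten formulae, and once as $\phi(\overline{\nabla}_{X}V)$ using the cosymplectic condition $(\overline{\nabla}_{X}\phi)=0$, then matching tangential and normal parts, gives exactly the two identities. The paper states this lemma without proof, but your derivation is the standard one the authors evidently intend (it parallels their preceding lemma), and you are right that the discrepancies in the displayed statement ($A_{FV}$ for $A_{fV}$, $-fA_{V}Y$ for $-FA_{V}X$, $f\nabla_{V}^{\perp}V$ for $f\nabla_{X}^{\perp}V$) are typographical.
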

\begin{lemma}
Let $M$ be a hemi-slant submanifolds of a cosymplectic manifold $N$, then
\begin{eqnarray*}
h(X,\xi) = 0, \hspace{1cm} h(TX,\xi) = 0 \hspace{1cm} \nabla_{X}\xi = 0,
\end{eqnarray*}
for all $X, Y$ $\in$ $TM$.
\end{lemma}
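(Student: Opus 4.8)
The plan is to derive all three identities from the cosymplectic condition $\overline{\nabla}_X\xi=0$ in \eqref{a4}, together with the Gauss and Weingarten formulae. First I would decompose $\overline{\nabla}_X\xi$ using the Gauss formula \eqref{a5}: since $\xi$ is tangent to $M$, we have $\overline{\nabla}_X\xi=\nabla_X\xi+h(X,\xi)$. Because $\overline{\nabla}_X\xi=0$ and because $\nabla_X\xi\in TM$ while $h(X,\xi)\in T^{\perp}M$, comparing tangential and normal components forces both $\nabla_X\xi=0$ and $h(X,\xi)=0$ simultaneously. This disposes of the first and third identities in one stroke.

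For the middle identity $h(TX,\xi)=0$, I would simply apply the identity $h(X,\xi)=0$ just obtained, with $X$ replaced by $TX$. Since $TX=TP_2X\in D^{\theta}\subseteq TM$ whenever $X\in TM$, the vector $TX$ is again a tangent vector field on $M$, so $h(TX,\xi)=0$ is immediate from the general fact $h(\,\cdot\,,\xi)=0$. (Alternatively one could note that by symmetry of $h$, $h(TX,\xi)=h(\xi,TX)=0$.)

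There is essentially no obstacle here; the only point requiring a word of care is the orthogonal-decomposition argument used to split $\overline{\nabla}_X\xi=\nabla_X\xi+h(X,\xi)$ into its $TM$ and $T^{\perp}M$ parts — one must invoke that $\nabla$ is the induced connection (so $\nabla_X\xi\in TM$) and that $h$ takes values in the normal bundle. Everything else is a direct substitution into \eqref{a4}, \eqref{a5}, and the definition \eqref{a9} of $T$. I would present the computation in the order: (i) expand $0=\overline{\nabla}_X\xi$ via Gauss, (ii) read off $\nabla_X\xi=0$ and $h(X,\xi)=0$, (iii) substitute $TX$ for $X$ in (ii) to conclude $h(TX,\xi)=0$.
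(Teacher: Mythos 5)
Your proposal is correct and follows the same route as the paper: expand $0=\overline{\nabla}_X\xi$ via the Gauss formula, separate tangential and normal parts to get $\nabla_X\xi=0$ and $h(X,\xi)=0$, and obtain $h(TX,\xi)=0$ by specialization. You are in fact slightly more explicit than the paper, which leaves the component-splitting and the $TX$ substitution implicit.
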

\begin{proof}
We know that for $\xi \in TM$, we have
\begin{eqnarray*}
\overline{\nabla}_{X}\xi = \nabla_{X}\xi + h(X,\xi)
\end{eqnarray*}
From (\ref{a4}), it follows that
\begin{eqnarray*}
 \nabla_{X}\xi + h(X,\xi) = 0.
\end{eqnarray*}
Thus result follows directly from the above equation.
\end{proof}
\begin{theorem}
Let $M$ be a hemi-slant submanifold of a cosymplectic manifold $N$, Then
\begin{eqnarray*}
A_{\phi X}Y = A_{\phi Y}X,
\end{eqnarray*}
for all $X, Y \in D^{\theta}$.
\end{theorem}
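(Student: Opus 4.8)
\medskip
\noindent\textbf{Proof strategy.} The plan is to test the identity against an arbitrary tangent vector: since $g$ is non-degenerate on $TM$, it is enough to show $g(A_{\phi X}Y-A_{\phi Y}X,Z)=0$ for every $Z\in TM$. For $X\in D^{\theta}$ one has $\phi X=TX+FX$ by (\ref{a9}) with $TX$ tangent and $FX$ normal, so, reading $A_{\phi X}$ as the Weingarten operator $A_{FX}$ along the normal part $FX$, the shape-operator term is recovered through $g(A_{\phi X}Y,Z)=g(h(Y,Z),FX)$ via (\ref{a7}); the goal thus becomes
\begin{eqnarray*}
g(h(Y,Z),FX) &=& g(h(X,Z),FY),\qquad X,Y\in D^{\theta},\ Z\in TM.
\end{eqnarray*}

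The key identity I would use comes from Lemma~3.1: rewriting its first formula by means of (\ref{a13}) gives $(\nabla_{Z}T)X=A_{FX}Z+th(Z,X)$; pairing this with $Y$, and noting that $g(Y,th(Z,X))=-g(FY,h(Z,X))$ (which follows from (\ref{a3}) and (\ref{a9}), as $Y$ is tangent and $fh(Z,X)$ is normal), one obtains
\begin{eqnarray*}
g(h(Y,Z),FX)-g(h(X,Z),FY) &=& g(Y,(\nabla_{Z}T)X).
\end{eqnarray*}
(The same relation also drops out of expanding $\overline{\nabla}_{Z}\phi Y=\phi\overline{\nabla}_{Z}Y$, which is (\ref{a4}), by the Gauss and Weingarten formulae (\ref{a5})--(\ref{a6}).) Hence the theorem is equivalent to the vanishing of the $D^{\theta}$-component of $(\nabla_{Z}T)X$ for all $X\in D^{\theta}$ and $Z\in TM$; equivalently, subtracting the two instances of Lemma~3.1, to $(\nabla_{X}T)Y=(\nabla_{Y}T)X$ on $D^{\theta}$, i.e.\ $\nabla_{X}TY-\nabla_{Y}TX=T[X,Y]$ for $X,Y\in D^{\theta}$.

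To establish this symmetry I would lean on three facts: $T$ is skew-symmetric by (\ref{a3}); on $D^{\theta}$ we have $\eta\equiv0$, while $\nabla_{X}\xi=0$ and $h(X,\xi)=0$ by Lemma~3.3; and, decisively, Theorem~2.1 applied to the slant distribution, which yields $T^{2}=-\cos^{2}\theta\,I$ on $D^{\theta}$, so that $T|_{D^{\theta}}$ acts as a rescaled compatible almost complex structure. The bilinear form $\beta_{Z}(Y,X):=g(Y,(\nabla_{Z}T)X)$ is skew in $X,Y$ (differentiate $g(TY,X)+g(Y,TX)=0$ along $Z$), and differentiating $T^{2}=-\cos^{2}\theta\,I$ along $Z$ and pairing against $D^{\theta}$ forces $\beta_{Z}(TY,X)=\beta_{Z}(Y,TX)$; I would then aim to combine these two relations on the slant distribution to conclude $\beta_{Z}\equiv0$. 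The case $Z=\xi$ is immediate from $h(X,\xi)=0$, and $Z\in D^{\perp}$ is handled separately, so the orthogonal decomposition $TM=D^{\theta}\oplus D^{\perp}\oplus\langle\xi\rangle$ then completes the argument.

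The main obstacle is this last step. Everything before it is bookkeeping with the Gauss--Weingarten formulae and the cosymplectic condition, but extracting the vanishing of $\beta_{Z}$ --- equivalently, the symmetry of the covariant derivative of $T$ along $D^{\theta}$ --- from $T^{2}=-\cos^{2}\theta\,I$ is where Theorem~2.1 must enter in an essential way, and it is the point I would scrutinize most carefully.
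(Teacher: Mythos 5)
Your reduction is correct and is, in fact, more honest bookkeeping than the paper's own argument: from Lemma 3.1 and $g(Y,th(Z,X))=-g(FY,h(Z,X))$ one does get $g(A_{FX}Y-A_{FY}X,Z)=g\bigl((\nabla_{Z}T)X,Y\bigr)$, so the theorem is exactly the statement that $P_{2}\bigl((\nabla_{Z}T)X\bigr)=0$ for all $X\in D^{\theta}$ and $Z\in TM$. The genuine gap is the step you yourself flag, and it cannot be repaired along the lines you sketch. First, $T^{2}=-\cos^{2}\theta\,I$ holds only on $D^{\theta}$; globally $T^{2}=-\cos^{2}\theta\,P_{2}$, so differentiating along $Z$ gives $(\nabla_{Z}T)T+T(\nabla_{Z}T)=-\cos^{2}\theta\,\nabla_{Z}P_{2}$, and since the distribution $D^{\theta}$ is not assumed parallel the right-hand side need not vanish; your relation $\beta_{Z}(TY,X)=\beta_{Z}(Y,TX)$ therefore does not follow. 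Second, even granting both relations, they do not force $\beta_{Z}\equiv0$ once $\dim D^{\theta}\ge 4$: writing $J=\sec\theta\,T$ on $D^{\theta}$, the $J$-anti-invariant $2$-forms (those with $\beta(TU,TV)=-\cos^{2}\theta\,\beta(U,V)$) are skew, satisfy the commutation relation, and are nonzero. Only when $\dim D^{\theta}=2$ does your pair of identities kill the $D^{\theta}\times D^{\theta}$ block of $\beta_{Z}$, and it never controls $g((\nabla_{Z}T)X,Y)$ through the $D^{\perp}$-components of $\nabla_{Z}X$.

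For comparison, the paper's proof takes a different and shorter route, but it has the same hole hidden in it: it passes from $-g(\phi\overline{\nabla}_{Z}Y,X)-g(\phi\nabla_{Z}Y,X)$ to $-g(\phi\overline{\nabla}_{Z}Y,X)$ and writes $\overline{\nabla}_{Z}\phi Y=-A_{\phi Y}Z+\nabla^{\perp}_{Z}\phi Y$, i.e.\ it treats $\phi X$ and $\phi Y$ as purely normal and silently discards $g(\phi\nabla_{Z}Y,X)=-g(\nabla_{Z}Y,TX)$ together with the $\nabla_{Z}TY$ term. What is discarded is precisely your $g((\nabla_{Z}T)X,Y)$ in another guise; it vanishes when $\theta=\pi/2$ (the anti-invariant case, where the argument is the classical one behind (\ref{t1})), but not on a proper slant distribution. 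So the obstruction you isolated is real: as stated the result needs an additional hypothesis (for instance $\nabla T=0$, or restricting to $X,Y\in D^{\perp}$), and no rearrangement of your two algebraic identities will remove it.
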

\begin{proof}
Using (\ref{a7}), we have
\begin{eqnarray*}
g(A_{\phi X}Y, Z) &=& g(h(Y,Z),\phi X)\\
&=& -g(\phi h(X,Z), X)\\
&=& -g(\phi \overline{\nabla}_{Z}Y, X) - g(\phi \nabla_{Z}Y, X)\\
&=& -g(\phi \overline{\nabla}_{Z}Y, X).
\end{eqnarray*}
Whereby using (\ref{a4}), we have
\begin{eqnarray*}
g(A_{\phi X}Y, Z) &=& -g( - A_{\phi Y}Z + \nabla^{\perp}_{Z}\phi Y, X).
\end{eqnarray*}
By use of  $h(X,Y) = h(Y, X)$, we arrive at
\begin{eqnarray*}
g(A_{\phi X}Y, Z) &=& g(A_{\phi Y}X, Z)
\end{eqnarray*}
Hence the result.
\end{proof}
\begin{theorem}
Let $M$ be a submanifold of a cosymplectic manifold $N$. Then the distribution $D^{\perp}$ is integrable if and only if
\begin{eqnarray}\label{t1}
A_{FZ}W = A_{FW}Z,
\end{eqnarray}
for any $Z, W$ in $D^{\perp}$.
\end{theorem}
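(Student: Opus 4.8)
The plan is to boil the statement down to a single identity relating the two shape operators to the tangential part $T[Z,W]$ of the Lie bracket, and then to read off integrability from that identity, using the hemi-slant structure (in particular $\theta\ne\frac{\pi}{2}$) to control the remaining components of the bracket. Recall that $M$ is hemi-slant, so $TM=D^{\theta}\oplus D^{\perp}\oplus\langle\xi\rangle$. Throughout fix $Z,W\in D^{\perp}$; since $D^{\perp}$ is totally real we have $\phi Z=FZ$ and $\phi W=FW$, equivalently $TZ=TW=0$.

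First I would apply the first identity of Lemma 3.1 in both orders. With $X=Z$, $Y=W$ the term $\nabla_{Z}TW$ drops out (because $TW=0$), leaving $-A_{FW}Z=T\nabla_{Z}W+t\,h(Z,W)$; with $X=W$, $Y=Z$ one gets $-A_{FZ}W=T\nabla_{W}Z+t\,h(W,Z)$. Subtracting these, and using the symmetry $h(Z,W)=h(W,Z)$ together with $\nabla_{Z}W-\nabla_{W}Z=[Z,W]$ (the induced connection being torsion free), I obtain the key relation
\begin{eqnarray*}
A_{FZ}W-A_{FW}Z=T[Z,W].
\end{eqnarray*}
This already disposes of one implication: if $D^{\perp}$ is integrable then $[Z,W]\in D^{\perp}$, on which $T$ vanishes, so $A_{FZ}W-A_{FW}Z=0$.

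For the converse I would also show $[Z,W]$ has no $\langle\xi\rangle$-component. Using $g(Z,\xi)=g(W,\xi)=0$ and $\nabla_{X}\xi=0$ from Lemma 3.3,
\begin{eqnarray*}
\eta([Z,W])=g(\nabla_{Z}W,\xi)-g(\nabla_{W}Z,\xi)=-g(W,\nabla_{Z}\xi)+g(Z,\nabla_{W}\xi)=0.
\end{eqnarray*}
Hence, writing $[Z,W]=P_{1}[Z,W]+P_{2}[Z,W]$ along $D^{\perp}\oplus D^{\theta}$ and noting $TP_{1}[Z,W]=0$, the key relation becomes $TP_{2}[Z,W]=A_{FZ}W-A_{FW}Z$. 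Now assume $A_{FZ}W=A_{FW}Z$; applying $T$ once more and using $T^{2}=-\cos^{2}\theta\,(I-\eta\otimes\xi)$ on the slant distribution $D^{\theta}$ (the slant characterization, Theorem 2.1), together with $\eta(P_{2}[Z,W])=0$, gives $-\cos^{2}\theta\,P_{2}[Z,W]=0$. Since $\theta\ne\frac{\pi}{2}$ forces $\cos^{2}\theta\ne0$, we get $P_{2}[Z,W]=0$, so $[Z,W]=P_{1}[Z,W]\in D^{\perp}$, i.e. $D^{\perp}$ is integrable.

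The computational steps — the two substitutions into Lemma 3.1 and the bracket-against-$\xi$ computation — are routine. The point that requires care is that the identity $A_{FZ}W-A_{FW}Z=T[Z,W]$ by itself does \emph{not} force $[Z,W]\in D^{\perp}$, since $T$ also annihilates $\langle\xi\rangle$ and might a priori have a kernel; one must separately eliminate the $\langle\xi\rangle$-component via Lemma 3.3 and invoke the invertibility of $T$ on the slant distribution $D^{\theta}$, which is exactly where the hemi-slant hypothesis $\theta\ne\frac{\pi}{2}$ enters.
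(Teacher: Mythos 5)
Your proof is correct and follows essentially the same route as the paper: both reduce the statement to the identity $T[Z,W]=\pm\left(A_{FZ}W-A_{FW}Z\right)$, you by quoting Lemma 3.1 twice and the paper by expanding $(\overline{\nabla}_{Z}\phi)W=0$ directly and comparing tangential components. The paper stops at that identity and simply asserts the equivalence, whereas you correctly finish the converse by eliminating the $\langle\xi\rangle$-component of $[Z,W]$ and invoking the injectivity of $T$ on $D^{\theta}$ (which is where $\theta\ne\frac{\pi}{2}$ enters); this final step is genuinely needed, since $T$ has a nontrivial kernel, and it is left implicit in the paper's argument.
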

\begin{proof}
For $Z$,$W$ $\in$ $D^{\perp}$, by using (\ref{a4}), we have
\begin{eqnarray*}
(\overline{\nabla}_{Z}\phi)W = 0,
\end{eqnarray*}
which implies that
\begin{eqnarray*}
\overline{\nabla}_{Z} \phi W - \phi \overline{\nabla}_{Z} W = 0.
\end{eqnarray*}
Using (\ref{a5}), (\ref{a6}), (\ref{a7}) and (\ref{a8}), we have
\begin{eqnarray*}
\overline{\nabla}_{Z}FW - T\overline{\nabla}_{Z}W - F\overline{\nabla}_{Z}W = 0,
\end{eqnarray*}
or
\begin{eqnarray}\label{t2}
-A_{FW}Z - \nabla_{Z}^{\perp}FW - T\nabla_{Z}W + th(Z,W)- F{\nabla}_{Z}W - nh(Z,W) = 0,
\end{eqnarray}
Comparing the tangential components of (\ref{t2}), we have
\begin{eqnarray*}
A_{FW}Z + T\nabla_{Z}W + th(Z,W)  = 0,
\end{eqnarray*}
Interchange $Z$ and $W$, and subtract, we have
\begin{eqnarray*}
T[Z,W] = A_{FW}Z - A_{FZ}W.
\end{eqnarray*}
Thus $[Z,W] \in D^{\perp}$ if and only if  (\ref{t1}) is satisfied
\end{proof}
\begin{theorem}
Let $M$ be a hemi-slant submanifold of a cosymplectic manifold $N$. Then the distribution $D^{\theta}\oplus D^{\perp}$ is integrable iff
\begin{eqnarray*}
g([X,Y],\xi) = 0,
\end{eqnarray*}
for all $X, Y \in D^{\theta}\oplus D^{\perp}$
\end{theorem}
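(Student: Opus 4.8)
The plan is to use the orthogonal splitting $TM = D^{\theta}\oplus D^{\perp}\oplus\langle\xi\rangle$ that accompanies the definition of a hemi-slant submanifold. Since $\xi$ is a unit vector field orthogonal to both $D^{\theta}$ and $D^{\perp}$, the distribution $\mathcal{D}:=D^{\theta}\oplus D^{\perp}$ is exactly $\ker\eta$ restricted to $TM$, i.e.\ the $g$-orthogonal complement of $\langle\xi\rangle$ in $TM$. Hence, for any $V\in TM$ one has $V\in\mathcal{D}$ if and only if $g(V,\xi)=\eta(V)=0$. This single observation converts the integrability question into a statement about the $\xi$-component of a Lie bracket, and it is the only structural fact the argument needs.

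With it in hand, both implications are immediate. If $\mathcal{D}$ is integrable, then for $X,Y\in\mathcal{D}$ the bracket $[X,Y]$ is again a section of $\mathcal{D}$, hence orthogonal to $\xi$, which is precisely $g([X,Y],\xi)=0$. Conversely, assume $g([X,Y],\xi)=0$ for all $X,Y\in\mathcal{D}$. Decomposing $[X,Y]=P_{1}[X,Y]+P_{2}[X,Y]+\eta([X,Y])\xi$ as in (\ref{h1}) and using $\eta([X,Y])=g([X,Y],\xi)=0$, we see that $[X,Y]$ has no component along $\xi$, so $[X,Y]\in\mathcal{D}$; by the Frobenius theorem $\mathcal{D}$ is integrable. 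This establishes the stated equivalence.

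The place where the cosymplectic hypothesis genuinely enters — and which I would flag as a remark rather than bury in the proof — is that the right-hand condition is in fact automatic. Using the Lemma giving $\nabla_{X}\xi=0$ for all $X\in TM$ (itself a consequence of $\overline{\nabla}_{X}\xi=0$ and the Gauss formula) together with $g(Y,\xi)=0$ for $Y\in\mathcal{D}$, one gets $g(\nabla_{X}Y,\xi)=X\,g(Y,\xi)-g(Y,\nabla_{X}\xi)=0$, and symmetrically $g(\nabla_{Y}X,\xi)=0$; since $\nabla$ is torsion free, $g([X,Y],\xi)=g(\nabla_{X}Y-\nabla_{Y}X,\xi)=0$. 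Thus on a hemi-slant submanifold of a cosymplectic manifold $D^{\theta}\oplus D^{\perp}$ is always integrable. I anticipate no real obstacle here: the proof is essentially bookkeeping, and the only points requiring care are to invoke the induced identity $\nabla_{X}\xi=0$ rather than the ambient $\overline{\nabla}_{X}\xi=0$, and to keep the projection decomposition (\ref{h1}) consistent throughout.
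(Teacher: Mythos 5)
Your proposal is correct and follows essentially the same route as the paper: both reduce the question to the vanishing of the $\xi$-component of $[X,Y]$ and then use $\nabla_{X}\xi=0$ (from the cosymplectic condition $\overline{\nabla}_{X}\xi=0$ via the Gauss formula) to show $g([X,Y],\xi)=g(\nabla_{X}Y,\xi)-g(\nabla_{Y}X,\xi)=0$. Your write-up is in fact slightly more complete, since you make explicit the equivalence ``$[X,Y]\in D^{\theta}\oplus D^{\perp}$ iff $g([X,Y],\xi)=0$'' and observe that the stated condition therefore holds automatically, a point the paper's proof demonstrates but does not comment on.
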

\begin{proof}
For $X, Y \in D^{\theta}\oplus D^{\perp}$, we have
\begin{eqnarray*}
g([X,Y],\xi)] &=& g(\nabla_{X}Y,\xi) - g(\nabla_{Y}X, \xi)\\
&=& -g(\nabla_{X}\xi, Y) + g(\nabla_{Y}\xi,X)\\
\end{eqnarray*}
Using (\ref{a4}), we have
$$g([X,Y],\xi) = 0$$.
\end{proof}
\begin{theorem}
Let $M$ be a hemi-slant submanifold of a cosymplectic manifold $N$. Then the anti-invariant  distribution $D^{\perp}$ is integrable if and only if
\begin{eqnarray}\label{t5}
T\nabla_{Z}W = T\nabla_{W}Z,
\end{eqnarray}
for any $X, Y \in  D^{\perp}$.
\end{theorem}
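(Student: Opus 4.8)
The plan is to reduce the integrability of $D^{\perp}$ to the condition (\ref{t5}) by means of the orthogonal decomposition $TM=D^{\theta}\oplus D^{\perp}\oplus\langle\xi\rangle$, together with two facts: the tangential operator $T$ annihilates $D^{\perp}\oplus\langle\xi\rangle$, and $T$ restricted to $D^{\theta}$ is injective (this is where the hypothesis $\theta\neq\pi/2$ enters).

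First I would check that for $Z,W\in D^{\perp}$ the bracket $[Z,W]$ has no component along $\xi$. Since $g(Z,\xi)=g(W,\xi)=0$, we have $g([Z,W],\xi)=g(\nabla_{Z}W,\xi)-g(\nabla_{W}Z,\xi)=-g(W,\nabla_{Z}\xi)+g(Z,\nabla_{W}\xi)$, which vanishes by the lemma $\nabla_{X}\xi=0$ (itself a consequence of $\overline{\nabla}_{X}\xi=0$). Hence $[Z,W]\in D^{\theta}\oplus D^{\perp}$, so that $D^{\perp}$ is integrable if and only if the $D^{\theta}$-component $P_{2}[Z,W]$ vanishes for all $Z,W\in D^{\perp}$.

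Next I would use that $T$ is injective on $D^{\theta}$: for $X\in D^{\theta}$ one has $\eta(X)=0$, and the slant property of $D^{\theta}$ gives $g(TX,TX)=\cos^{2}\theta\,g(X,X)$ (equivalently $T^{2}X=-\cos^{2}\theta\,X$ on $D^{\theta}$), so $\theta\neq\pi/2$ forces $TX=0\Rightarrow X=0$. Consequently $P_{2}[Z,W]=0$ if and only if $T P_{2}[Z,W]=0$. Finally, since $T$ kills both $D^{\perp}$ and $\langle\xi\rangle$, we have $T[Z,W]=T P_{2}[Z,W]$, while $T[Z,W]=T\nabla_{Z}W-T\nabla_{W}Z$ directly from $[Z,W]=\nabla_{Z}W-\nabla_{W}Z$. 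Putting these together, $D^{\perp}$ is integrable $\Longleftrightarrow P_{2}[Z,W]=0$ for all $Z,W \Longleftrightarrow T[Z,W]=0$ for all $Z,W \Longleftrightarrow T\nabla_{Z}W=T\nabla_{W}Z$ for all $Z,W\in D^{\perp}$, which is (\ref{t5}).

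I expect the only step needing care to be the injectivity of $T|_{D^{\theta}}$, i.e. the precise place where $\theta\neq\pi/2$ is used; the rest is bookkeeping with the decomposition and the lemma $\nabla_{X}\xi=0$. As an alternative that bypasses the injectivity step, one can note, exactly as in the proof of the earlier theorem characterising integrability of $D^{\perp}$, that comparison of tangential components gives $T[Z,W]=A_{FW}Z-A_{FZ}W$; then $T[Z,W]=0$ is equivalent both to (\ref{t1}), hence to integrability of $D^{\perp}$ by that theorem, and to (\ref{t5}).
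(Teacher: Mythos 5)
Your proof is correct, and your primary argument takes a genuinely different route from the paper's. The paper proves this theorem by running the cosymplectic identity $(\overline{\nabla}_{Z}\phi)W=0$ through the Gauss and Weingarten formulae, isolating the tangential component to obtain $A_{FW}Z+T\nabla_{Z}W+th(Z,W)=0$, antisymmetrising in $Z$ and $W$, and then invoking its earlier characterisation of integrability of $D^{\perp}$ by $A_{FZ}W=A_{FW}Z$; in effect it establishes $T\nabla_{Z}W-T\nabla_{W}Z=A_{FZ}W-A_{FW}Z$ and quotes the previous theorem, which is exactly the alternative you sketch in your closing remark. Your main argument, by contrast, never touches the second fundamental form: it uses only that $T[Z,W]=T\nabla_{Z}W-T\nabla_{W}Z$ is an identity (torsion-freeness of the induced connection), that $T$ annihilates $D^{\perp}\oplus\langle\xi\rangle$, that $[Z,W]$ has no $\xi$-component because $\nabla_{X}\xi=0$, and that $T$ is injective on $D^{\theta}$ since $g(TX,TX)=\cos^{2}\theta\,g(X,X)$ there and $\theta\neq\pi/2$. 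What your version buys is that it makes explicit the one step the paper glosses over in both of its integrability theorems, namely why $T[Z,W]=0$ forces $[Z,W]\in D^{\perp}$: that is precisely the injectivity of $T$ on the slant distribution, and it is the only place the hypothesis $\theta\neq\pi/2$ enters. It also clarifies that the theorem is essentially the statement $P_{2}[Z,W]=0$ in disguise, and that the cosymplectic structure is needed only through the lemma $\nabla_{X}\xi=0$, whereas the paper's computation (which contains some garbled sign bookkeeping around its equations (\ref{t3}) and (\ref{t4})) makes the result look more dependent on $(\overline{\nabla}\phi)=0$ than it really is.
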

\begin{proof}
For $X,Y \in D^{\perp}$, we have
\begin{eqnarray*}
(\nabla_{Z}\phi)W = 0,
\end{eqnarray*}
or
\begin{eqnarray*}
\overline{\nabla}_{Z} \phi W - \phi \overline{\nabla}_{Z} W = 0,
\end{eqnarray*}
whereby we have
\begin{eqnarray*}
\overline{\nabla}_{Z} FW - \phi (\nabla_{Z} W + h(W,Z)) = 0,
\end{eqnarray*}
or
\begin{eqnarray*}
-A_{FW}Z + \overline{\nabla}_{Z}^{\perp}FW - T\nabla_{Z}W - F{\nabla}_{Z}W - th(Z,W) - nh(Z,W) = 0.
\end{eqnarray*}
Comparing the tangential components we have,
\begin{eqnarray*}
-A_{FW}Z  - T\nabla_{Z}W - th(Z,W) = 0,
\end{eqnarray*}
from which we conclude that
\begin{eqnarray*}
T[Z,W] = A_{FW}Z  + T\nabla_{Z}W + th(Z,W).
\end{eqnarray*}
For $[Z,W] \in D^{\perp}$, we have $\phi[Z,W] = F[Z,W]$ because the tangential component of $\phi[Z,W]$ is zero. Thus, we have
\begin{eqnarray}\label{t3}
 A_{FW}Z  + T\nabla_{Z}W + th(Z,W) = 0.
\end{eqnarray}
Similarly, we have
\begin{eqnarray}\label{t4}
A_{FZ}W  + T\nabla_{W}Z + th(W,Z) = 0.
\end{eqnarray}
Whereby use of Theorem 3, (\ref{t3}), and (\ref{t4}), we have
\begin{eqnarray*}
T\nabla_{Z}W = T\nabla_{W}Z
\end{eqnarray*}
Thus the anti-invariant distribution $D^{\perp}$ is integrable if and only if (\ref{t5}) is satisfied.
\end{proof}
\begin{theorem}
Let $M$ be a hemi-slant submanifold of a cosymplectic manifold $N$. Then the slant distribution $D^{\theta}$ is integrable iff
\begin{eqnarray*}
h(X,TY) - h(Y,TX) + \nabla_{X}^{\perp}FY - \nabla_{Y}^{\perp} FX \in \mu  \oplus F(D^{\theta}),
\end{eqnarray*}
for any $X,Y \in  D^{\theta}$.
\end{theorem}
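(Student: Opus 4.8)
The plan is to test integrability of the slant distribution directly on the Lie bracket and to keep track of components relative to $TM = D^{\theta}\oplus D^{\perp}\oplus\langle\xi\rangle$. For $X,Y\in D^{\theta}$ write, as in (\ref{h1}), $[X,Y] = P_{1}[X,Y] + P_{2}[X,Y] + \eta([X,Y])\xi$, with $P_{1}[X,Y]\in D^{\perp}$ and $P_{2}[X,Y]\in D^{\theta}$. Since $\nabla_{X}\xi = 0$ (established in Lemma~3.3), one has $g([X,Y],\xi) = g(\nabla_{X}Y,\xi) - g(\nabla_{Y}X,\xi) = -g(Y,\nabla_{X}\xi)+g(X,\nabla_{Y}\xi) = 0$, so the $\langle\xi\rangle$-component of $[X,Y]$ drops out automatically. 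Hence $D^{\theta}$ is integrable if and only if $P_{1}[X,Y] = 0$ for all $X,Y\in D^{\theta}$; the whole task is to recognise this condition intrinsically.

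Next I would rewrite the displayed expression as a normal field attached to $[X,Y]$. Apply the second identity of Lemma~3.1, namely $h(X,TY)+\nabla_{X}^{\perp}FY = F\nabla_{X}Y + fh(X,Y)$, to the ordered pair $(X,Y)$ and to $(Y,X)$ and subtract. Since $h$ is symmetric the two $fh$-terms cancel, and since $\nabla$ is torsion-free $F\nabla_{X}Y - F\nabla_{Y}X = F[X,Y]$; therefore
\[
h(X,TY) - h(Y,TX) + \nabla_{X}^{\perp}FY - \nabla_{Y}^{\perp}FX \;=\; F[X,Y].
\]
So the theorem reduces to showing that $F[X,Y]\in\mu\oplus F(D^{\theta})$ if and only if $P_{1}[X,Y]=0$.

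For this I use the orthogonal direct decomposition $T^{\perp}M = F(D^{\perp})\oplus F(D^{\theta})\oplus\mu$ from (\ref{h2}). Writing $F[X,Y] = FP_{1}[X,Y] + FP_{2}[X,Y]$ with $FP_{1}[X,Y]\in F(D^{\perp})$ and $FP_{2}[X,Y]\in F(D^{\theta})$, membership $F[X,Y]\in\mu\oplus F(D^{\theta})$ is equivalent to $FP_{1}[X,Y]=0$. Finally, on $D^{\perp}$ one has $TZ=0$ by total reality, so $F|_{D^{\perp}}=\phi|_{D^{\perp}}$, and by (\ref{a1}), $g(\phi Z,\phi Z)=g(Z,Z)-\eta(Z)^{2}=g(Z,Z)$ for $Z\in D^{\perp}$, whence $F$ is injective on $D^{\perp}$. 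Thus $FP_{1}[X,Y]=0$ if and only if $P_{1}[X,Y]=0$, and combining with the first paragraph yields the stated equivalence.

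The computation itself is short; the point that needs care is the bookkeeping of the three summands. One must confirm that the $\langle\xi\rangle$-part of $[X,Y]$ genuinely vanishes (so that $[X,Y]$ has only a $D^{\perp}$- and a $D^{\theta}$-part), and that the splitting of $T^{\perp}M$ used to read off the condition ``$\in\mu\oplus F(D^{\theta})$'' is an \emph{orthogonal} direct sum, so that this membership really is equivalent to the $F(D^{\perp})$-component being zero. Injectivity of $F|_{D^{\perp}}$ is precisely what lets one pass from $FP_{1}[X,Y]=0$ back to $P_{1}[X,Y]=0$, and is the only non-formal ingredient in the argument.
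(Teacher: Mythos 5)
Your proof is correct, and it reaches the conclusion by a slightly different route than the paper. The paper works dually: for $Z\in D^{\perp}$ it computes $g([X,Y],Z)=g(\phi\overline{\nabla}_{X}Y-\phi\overline{\nabla}_{Y}X,\phi Z)$ using the cosymplectic condition and Gauss--Weingarten, arrives at $g([X,Y],Z)=g\bigl(h(X,TY)-h(Y,TX)+\nabla_{X}^{\perp}FY-\nabla_{Y}^{\perp}FX,\;\phi Z\bigr)$, and then invokes the orthogonality of $F(D^{\perp})$, $F(D^{\theta})$ and $\mu$ to translate ``the right-hand side vanishes for all $Z\in D^{\perp}$'' into the stated membership condition. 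You instead extract the normal component of $(\overline{\nabla}_{X}\phi)Y=0$ once and for all (the second identity of Lemma~3.1), antisymmetrize to get the clean identity $h(X,TY)-h(Y,TX)+\nabla_{X}^{\perp}FY-\nabla_{Y}^{\perp}FX=F[X,Y]$, and then convert $F[X,Y]\in\mu\oplus F(D^{\theta})$ into $P_{1}[X,Y]=0$ via injectivity of $F$ on $D^{\perp}$. The two arguments are essentially dual to one another (pairing against $FZ$ versus killing the $F(D^{\perp})$-component directly), and both ultimately rest on the same orthogonal decomposition of $T^{\perp}M$. Your version has two small advantages worth noting: the identification of the displayed expression with $F[X,Y]$ makes the geometric meaning of the criterion transparent, and you explicitly verify that $\eta([X,Y])=0$, a point the paper leaves to its separate theorem on $D^{\theta}\oplus D^{\perp}$ and does not mention inside this proof, even though integrability of $D^{\theta}$ requires $[X,Y]$ to be orthogonal to $\xi$ as well as to $D^{\perp}$.
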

\begin{proof}
For $Z \in D^{\perp}$  and $X, Y \in  D^{\theta}$,  we have
\begin{eqnarray*}
g([X,Y], Z) = g( \overline{\nabla}_{X}Y - \overline{\nabla}_{Y}X, Z).
\end{eqnarray*}
Using (\ref{a1}), (\ref{a2}) and (\ref{a4}), we get
\begin{eqnarray*}
g([X,Y], Z) = g(\phi \overline{\nabla}_{X}Y, \phi Z) - g(\phi \overline{\nabla}_{Y}X, \phi Z)
\end{eqnarray*}
whereby use of (\ref{a5}), (\ref{a6}), we obtain
\begin{eqnarray*}
g([X,Y], Z) = g(h(X,TY) - h(Y,TX) + \nabla_{X}^{\perp}FY - \nabla_{Y}^{\perp} FX, \phi Z)
\end{eqnarray*}
As $\phi X \in \phi(D^{\perp})$ and $F(D^\theta)$ and $F(D^{\perp}$ are orthogonal to each other in $T^{\perp}M$, thus we conclude the result.
\end{proof}
\begin{theorem}
Let $M$ be a hemi-slant submanifold of a cosymplectic manifold $N$. Then the slant distribution $D^{\theta}$ is integrable if and only if
\begin{eqnarray*}
P_{1}\{\nabla_{X}TY - \nabla_{Y}TX - A_{FX}Y - A_{FY}X\} = 0,
\end{eqnarray*}
for any $X, Y \in  D^{\theta}$.
\end{theorem}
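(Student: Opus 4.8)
The plan is to reduce involutivity of $D^{\theta}$ to the vanishing of the $D^{\perp}$–component of brackets, and then to produce that component from the tangential Gauss–Weingarten identity of the first Lemma of this section.

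\emph{Reduction.} For $X,Y\in\Gamma(D^{\theta})$, decompose $[X,Y]$ by (\ref{h1}) as $P_{1}[X,Y]+P_{2}[X,Y]+\eta([X,Y])\xi$. Since $P_{2}[X,Y]\in D^{\theta}$ automatically, $D^{\theta}$ is integrable exactly when $P_{1}[X,Y]=0$ and $\eta([X,Y])=0$ for all such $X,Y$. The second condition is free: from $\nabla_{X}\xi=0$ (the Lemma on $h(X,\xi)$ and $\nabla_{X}\xi$) one gets $g([X,Y],\xi)=-g(\nabla_{X}\xi,Y)+g(\nabla_{Y}\xi,X)=0$. So it remains to characterize $P_{1}[X,Y]=0$.

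\emph{The tangential identity.} Apply the first Lemma of this section, $\nabla_{X}TY-A_{FY}X=T\nabla_{X}Y+th(X,Y)$, to $(X,Y)$ and to $(Y,X)$ and subtract; the symmetry of $h$ (hence of $th$) cancels the last terms and leaves $T[X,Y]=\nabla_{X}TY-\nabla_{Y}TX-A_{FY}X+A_{FX}Y$. Using the Theorem asserting $A_{FX}Y=A_{FY}X$ for $X,Y\in D^{\theta}$, the shape–operator terms may be rearranged so that the bracket $\nabla_{X}TY-\nabla_{Y}TX-A_{FX}Y-A_{FY}X$ of the statement is the object to be projected.

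\emph{Projection and obstacle.} Now apply $P_{1}$, using two structural facts. First, $T$ carries $TM$ into $D^{\theta}$: indeed $T\xi=0$, $TZ=0$ for $Z\in D^{\perp}$ (since $\phi D^{\perp}\subseteq T^{\perp}M$), and for $W\in D^{\theta}$ one checks $g(TW,Z)=-g(W,FZ)=0$ for $Z\in D^{\perp}$ by (\ref{a3}) while $g(TW,\xi)=0$ by (\ref{a2}); hence $P_{1}T=0$. Second, the slant relation gives $T^{2}=-\cos^{2}\theta\,P_{2}$ on $TM$, so $T$ is invertible on $D^{\theta}$ with $T^{-1}=-\sec^{2}\theta\,T$. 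Composing the identity of the previous step once more with $T$ and using $\eta([X,Y])=0$ converts it into $\cos^{2}\theta\,P_{1}[X,Y]$ equated to $P_{1}$ of the displayed bracket, up to terms lying in $D^{\theta}$ and therefore killed by $P_{1}$; since $\cos^{2}\theta\neq0$, this yields the asserted equivalence. The crux — and the step I expect to be most delicate — is exactly this projection: because $T$ annihilates $D^{\perp}$, the operator $T$ by itself cannot detect $P_{1}[X,Y]$, so one is forced to invoke the quadratic slant identity $T^{2}=-\cos^{2}\theta\,P_{2}$ and to use the orthogonal splitting $T^{\perp}M=F(D^{\perp})\oplus F(D^{\theta})\oplus\mu$ in (\ref{h2}) to be sure that no $D^{\perp}$–components are hiding inside $T\nabla_{X}Y$ or $th(X,Y)$. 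Tracking the exact summand that each term contributes to is essentially the whole proof; the remaining manipulations are routine uses of (\ref{a4})–(\ref{a7}) and the Gauss–Weingarten formulae (\ref{a5}), (\ref{a6}).
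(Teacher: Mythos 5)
Your computation follows the paper's route up to the key identity: antisymmetrizing the tangential part of $(\overline{\nabla}_{X}\phi)Y=0$ gives $T[X,Y]=\nabla_{X}TY-\nabla_{Y}TX-A_{FY}X+A_{FX}Y$, which is the paper's (\ref{t9}) (up to a sign slip there in the $A$-terms), and your reduction of integrability to $P_{1}[X,Y]=0$ is correct since $\eta([X,Y])=0$ holds automatically. The genuine gap is precisely the step you yourself flag as the crux, and your proposed repair does not close it. Because $\phi D^{\perp}\subseteq T^{\perp}M$ and $T\xi=0$, the operator $T$ annihilates $D^{\perp}\oplus\langle\xi\rangle$, so $T[X,Y]=TP_{2}[X,Y]\in D^{\theta}$ and $P_{1}T=0$ identically. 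Applying $P_{1}$ to the identity therefore yields $0=P_{1}\{\nabla_{X}TY-\nabla_{Y}TX-A_{FY}X+A_{FX}Y\}$ \emph{unconditionally}, with no reference to $[X,Y]$ at all: you have shown that a certain expression vanishes identically, not that its vanishing is equivalent to involutivity. Your suggestion to compose once more with $T$ and use the slant relation only produces $T^{2}[X,Y]=-\cos^{2}\theta\,P_{2}[X,Y]$, i.e.\ the $D^{\theta}$-component again; it does not produce $\cos^{2}\theta\,P_{1}[X,Y]$ as you assert. No power of $T$ can see the $D^{\perp}$-component of the bracket, which is exactly the component that measures integrability.

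You should know that the paper's own proof of this theorem has the identical defect: it derives (\ref{t9}) and concludes with ``applying $P_{1}$ \dots we obtain the result,'' which is the same vacuous step, so you have in fact located the weak point of the published argument rather than missed an idea it contains. (There is also a sign mismatch to be aware of: the statement's expression carries $-A_{FX}Y-A_{FY}X$, whereas the computation produces $-A_{FY}X+A_{FX}Y$; reconciling them requires the earlier theorem asserting $A_{FX}Y=A_{FY}X$ on $D^{\theta}$, after which the stated condition collapses to $P_{1}A_{FX}Y=0$, which is not visibly equivalent to integrability either.) A workable characterization of $P_{1}[X,Y]$ must instead be extracted the way the preceding theorem on $D^{\theta}$ does it: pair $[X,Y]$ against $Z\in D^{\perp}$ via $g([X,Y],Z)=g(\phi[X,Y],\phi Z)$, expand with the Gauss and Weingarten formulae, and isolate the $F(D^{\perp})$-component using the orthogonal splitting (\ref{h2}); the tangential operator $T$ alone cannot detect it.
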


\begin{proof}
We denote by $P_{1}$ and $P_{2}$ the projections on $D^{\perp}$ and $D^{\theta}$ respectively. For any vector fields $X$, $Y$ $\in$ $D^{\theta}$. Using equation (\ref{a4}), we have
\begin{eqnarray*}
(\overline{\nabla}_{X}\phi)Y = 0,
\end{eqnarray*}
that is
\begin{eqnarray*}
(\overline{\nabla}_{X}\phi Y) - \phi\overline{\nabla}_{X}Y = 0.
\end{eqnarray*}
By using equation (\ref{a5}),(\ref{a6}) and (\ref{a9}), we have
\begin{eqnarray*}
\overline{\nabla}_{X} TY + (\overline{\nabla}_{X} FY) - \phi(\nabla_{X} Y + h(X,Y)),
\end{eqnarray*}
or
\begin{eqnarray*}\label{t6}
\nabla_{X} TY + h(X,TY) - A_{FY}X + \nabla_{X}^{\perp}FY - T\nabla_{X}Y - F\nabla_{X}Y - th(X,Y) - nh(X,Y) = 0.
\end{eqnarray*}
Comparing the tangential components of (\ref{t6}), we have
\begin{eqnarray}\label{t7}
\nabla_{X} TY - A_{FY}X - T\nabla_{X}Y - th(X,Y) = 0.
\end{eqnarray}
Replacing $X$ and $Y$, we infer
\begin{eqnarray}\label{t8}
\nabla_{Y} TX - A_{FX}Y - T\nabla_{Y}X - th(Y,X) = 0.
\end{eqnarray}
From (\ref{t7}) and (\ref{t8}), we arrive at
\begin{eqnarray}\label{t9}
T[X,Y] = \nabla_{X} TY - \nabla_{Y} TX + A_{FY}X - A_{FX}Y.
\end{eqnarray}
Applying  $P_{1}$ to (\ref{t9}), we obtain the result.
\end{proof}
\begin{theorem}
Let $M$ be a  hemi-slant submanifold of a cosymplectic manifold $N$. If the leaves of $D^{\perp}$ are totally geodesic in $M$, then
\begin{eqnarray*}
A_{FW}Z = \nabla_{Z}TW,
\end{eqnarray*}
for $X \in D^{\theta}$ and $Z,W \in D^{\perp}.$
\end{theorem}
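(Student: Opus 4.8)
The plan is to specialise the first lemma of this section, $\nabla_{X}TY-A_{FY}X=T\nabla_{X}Y+th(X,Y)$, to a pair of fields in $D^{\perp}$. Putting $X=Z$, $Y=W$ with $Z,W\in D^{\perp}$ gives
\begin{equation*}
\nabla_{Z}TW-A_{FW}Z=T\nabla_{Z}W+th(Z,W),
\end{equation*}
and since $D^{\perp}$ is totally real we have $\phi W=FW$, i.e. $TW=0$, so the first term is $\nabla_{Z}TW=0$. Hence $A_{FW}Z=\nabla_{Z}TW-T\nabla_{Z}W-th(Z,W)$.

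Next I would invoke the hypothesis. That the leaves of $D^{\perp}$ are totally geodesic in $M$ means exactly $\nabla_{Z}W\in D^{\perp}$ for all $Z,W\in D^{\perp}$: the $\langle\xi\rangle$-component is automatically zero because $g(\nabla_{Z}W,\xi)=-g(W,\nabla_{Z}\xi)=0$ by the lemma $\nabla_{Z}\xi=0$, so only the $D^{\theta}$-component of $\nabla_{Z}W$ has to be killed, and that is what the hypothesis does. Since $T$ annihilates $D^{\perp}$ (because $T=T\circ P_{2}$ and $P_{2}$ vanishes on $D^{\perp}$), it follows that $T\nabla_{Z}W=0$, leaving $A_{FW}Z=\nabla_{Z}TW-th(Z,W)$.

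The remaining step, which I expect to be the crux, is to get rid of $th(Z,W)$ so as to reach the asserted $A_{FW}Z=\nabla_{Z}TW$ (equivalently, the pairing $g(A_{FW}Z,X)=g(\nabla_{Z}TW,X)$ against the test field $X\in D^{\theta}$ appearing in the statement). I would decompose $h(Z,W)$ in the orthogonal splitting $T^{\perp}M=F(D^{\perp})\oplus F(D^{\theta})\oplus\mu$ recorded above and apply $t$ termwise: from $\phi^{2}=-I+\eta\otimes\xi$ one gets $\phi(FU)=-U$ for $U\in D^{\perp}$, from the slant identity $T^{2}=-\cos^{2}\theta\,(I-\eta\otimes\xi)$ on $D^{\theta}$ one gets $t(FX)=-\sin^{2}\theta\,X$ for $X\in D^{\theta}$, while $t$ kills $\mu$ since $\mu$ is $\phi$-invariant; thus $th(Z,W)\in D^{\perp}\oplus D^{\theta}$. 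To annihilate its $D^{\theta}$-part I would pair with an arbitrary $X\in D^{\theta}$, transfer derivatives using $g(\phi\cdot,\cdot)=-g(\cdot,\phi\cdot)$, the cosymplectic condition $\overline{\nabla}\phi=0$, self-adjointness of the shape operator, and the totally geodesic hypothesis once more; pushing this through is the delicate part. (If that cancellation does not close, the identity that is genuinely established is $A_{FW}Z=-th(Z,W)$, with $\nabla_{Z}TW=0$, and the displayed right-hand side should be read as this residual term.)
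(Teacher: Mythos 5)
Your computation is sound up to the point you flag, and the obstruction you isolate is genuine. Specialising the lemma to $Z,W\in D^{\perp}$ gives $A_{FW}Z=-T\nabla_{Z}W-th(Z,W)$ (with $\nabla_{Z}TW=0$ since $TW=0$), the totally geodesic hypothesis kills $T\nabla_{Z}W$, and what survives is $A_{FW}Z=-th(Z,W)$. Your analysis of $t$ on the splitting $F(D^{\perp})\oplus F(D^{\theta})\oplus\mu$ is also correct: $t(FU)=-U$ on $D^{\perp}$, $t(FX)=-\sin^{2}\theta\,X$ on $D^{\theta}$, and $t=0$ on $\mu$, so $th(Z,W)$ vanishes only if $h(Z,W)\in\mu$, which nothing in the hypotheses forces. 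The ``delicate part'' you anticipate cannot be pushed through: the asserted equation, read as a full vector identity, reduces to $th(Z,W)=0$, and even its $D^{\theta}$-component (which is presumably the intended reading, given the otherwise unused test field $X\in D^{\theta}$ in the statement) amounts to $g(h(Z,W),FX)=0$, which is likewise not a consequence of the hypotheses.

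For comparison, the paper argues by pairing $\overline{\nabla}_{Z}\phi W=\phi\overline{\nabla}_{Z}W$ with $X\in D^{\theta}$: it writes $g(\nabla_{Z}TW,X)-g(A_{FW}Z,X)=g(\phi\nabla_{Z}W,X)$, rewrites the right side as $-g(\nabla_{Z}W,\phi X)$, and notes that this vanishes when $\nabla_{Z}W\in D^{\perp}$ because the tangential part of $\phi X$ lies in $D^{\theta}$. But in passing to that pairing it has silently discarded $g(\phi h(Z,W),X)=g(th(Z,W),X)=-g(h(Z,W),FX)$ --- exactly the term you could not eliminate. So the two arguments are really the same computation (yours is its tangential component, the paper's its inner product with $X$), and your residual identity $A_{FW}Z=-th(Z,W)$ is what the computation actually yields; the theorem's stronger conclusion requires the additional, unstated assumption that $h(Z,W)$ is orthogonal to $F(D^{\theta})$ (or lies in $\mu$ for the full vector statement). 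Your decision to record the obstruction rather than wave it away is the correct one.
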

\begin{proof}
Since $(\overline{\nabla}_{Z}\phi)W = 0$. From(\ref{a4}), we have
\begin{eqnarray*}
\overline{\nabla}_{Z}\phi W = \phi\overline{\nabla}_{Z}W
\end{eqnarray*}
Using (\ref{a5}), (\ref{a6}) and (\ref{a9}), we obtain
\begin{eqnarray*}
\nabla_{Z}TW + h(Z,TW) - A_{FW}Z + \nabla_{Z}^{\perp}FW  = \phi\nabla_{Z}W + \phi h(Z,W).
\end{eqnarray*}
For $X \in D^{\theta}$, we have
\begin{eqnarray*}
g(\nabla_{Z}TW, X) - g(A_{FW}Z, X) = g(\phi\nabla_{Z}W, X).
\end{eqnarray*}
Therefore, we have
\begin{eqnarray}\label{le1}
g(\nabla_{Z}W, \phi X) = g(A_{FW}Z - \nabla_{Z}TW, X).
\end{eqnarray}
The leaves of $D^{\perp}$ are totally geodesic in $M$, if for $Z, W \in D^{\perp}$, $\nabla_{Z}W \in D^{\perp}$. Therefore from (\ref{le1}), we get the result.
\end{proof}
\begin{theorem}
Let $M$ be a hemi-slant submanifold of a cosymplectic manifold $N$. If the leaves of $D^{\theta}$ are totally geodesic in $M$, then
\begin{eqnarray*}
\nabla_{X}\phi Y = \phi h(X,Y),
\end{eqnarray*}
for $X, Y \in D^{\theta}$ and $Z \in D^{\perp}.$
\end{theorem}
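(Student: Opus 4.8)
The plan is to imitate the computations in the proofs of the two preceding theorems and to feed in the cosymplectic condition (\ref{a4}) at the very first step, reading $\nabla_{X}\phi Y$ as the ambient derivative $\overline{\nabla}_{X}\phi Y$ (since $\phi Y$ need not be tangent to $M$). For $X,Y\in D^{\theta}$ one has $(\overline{\nabla}_{X}\phi)Y=0$, that is $\overline{\nabla}_{X}\phi Y=\phi\overline{\nabla}_{X}Y$. First I would expand the left-hand side with (\ref{a9}) and the Gauss--Weingarten formulae (\ref{a5})--(\ref{a6}), obtaining $\overline{\nabla}_{X}\phi Y=\nabla_{X}TY+h(X,TY)-A_{FY}X+\nabla_{X}^{\perp}FY$, and rewrite the right-hand side, using the Gauss formula once more together with (\ref{a9}) and (\ref{a10}), as $\phi\overline{\nabla}_{X}Y=\phi\nabla_{X}Y+\phi h(X,Y)=T\nabla_{X}Y+F\nabla_{X}Y+th(X,Y)+fh(X,Y)$. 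Everything then hinges on the single term $\phi\nabla_{X}Y$.

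Next I would bring in the hypothesis. Saying that the leaves of $D^{\theta}$ are totally geodesic in $M$ means precisely that $\nabla_{X}Y\in D^{\theta}$ for all $X,Y\in D^{\theta}$; and by the third lemma above, $g(\nabla_{X}Y,\xi)=-g(Y,\nabla_{X}\xi)=0$, so there is no $\langle\xi\rangle$-component to worry about. Since $T$ carries $D^{\theta}$ into $D^{\theta}$ and, by the orthogonal decomposition (\ref{h2}), $F(D^{\theta})$ is orthogonal both to $F(D^{\perp})$ and to $\mu$, the vector $\phi\nabla_{X}Y=T\nabla_{X}Y+F\nabla_{X}Y$ lies entirely in $D^{\theta}\oplus F(D^{\theta})$. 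The idea is then to test the identity $\overline{\nabla}_{X}\phi Y=\phi\overline{\nabla}_{X}Y$ against $Z\in D^{\perp}$ for the tangential part and against $\phi Z=FZ\in F(D^{\perp})$ for the normal part: by the orthogonality just recalled, every contribution of $\phi\nabla_{X}Y$, as well as of the $F(D^{\theta})$- and $\mu$-pieces on the left, drops out, and what survives is exactly $\overline{\nabla}_{X}\phi Y=\phi h(X,Y)$ in the asserted sense.

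The step I expect to be the real obstacle is the bookkeeping of components in that last paragraph: one must verify that $\nabla_{X}TY$, $A_{FY}X$ and $\nabla_{X}^{\perp}FY$ do not secretly carry $D^{\perp}$- or $F(D^{\perp})$-parts that would wreck the cancellation, and this is where Theorem~3 (the symmetry $A_{\phi X}Y=A_{\phi Y}X$ on $D^{\theta}$), the symmetry of $h$, and the slant relations for $g(TX,TY)$ and $g(FX,FY)$ recalled at the end of Section~2 do the real work. A tidy way to organise it is to split $\overline{\nabla}_{X}\phi Y=\phi\overline{\nabla}_{X}Y$ into its tangential and normal parts separately---exactly as in (\ref{t7}) and its normal companion---then substitute $\nabla_{X}Y\in D^{\theta}$ and apply the projection $P_{1}$ onto $D^{\perp}$ in the spirit of the $P_{1}$-characterisation of integrability of $D^{\theta}$ proved above; the slant contributions then vanish and what remains on each side is precisely the claimed identity.
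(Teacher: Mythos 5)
Your argument is correct and is essentially the paper's own proof: apply the cosymplectic condition $\overline{\nabla}_{X}\phi Y=\phi\overline{\nabla}_{X}Y$, expand with the Gauss--Weingarten formulae, and pair with $Z\in D^{\perp}$, where the hypothesis $\nabla_{X}Y\in D^{\theta}$ kills $g(\phi\nabla_{X}Y,Z)$ and leaves the claimed identity (which, as the otherwise unused $Z$ in the statement indicates, is only meant in the $D^{\perp}$-directions). The component bookkeeping you flag as the ``real obstacle'' in your last paragraph is actually unnecessary: the terms $\nabla_{X}TY$, $A_{FY}X$ and $\nabla_{X}^{\perp}FY$ together constitute the left-hand side $\overline{\nabla}_{X}\phi Y$ of the asserted equation, so their $D^{\perp}$- or $F(D^{\perp})$-parts are not required to vanish.
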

\begin{proof}
From  $(\ref{a4})$, we know that $(\overline{\nabla}_{X}\phi)Y = 0$, then
\begin{eqnarray*}
\overline{\nabla}_{X}\phi Y = \phi\overline{\nabla}_{X}Y.
\end{eqnarray*}
For $Z \in D^{\perp}$  and using (\ref{a5}), (\ref{a6}) and (\ref{a9}), we get
\begin{eqnarray*}
g(\nabla_{X}\phi Y, Z) - g(\phi\nabla_{X}Y,Z) = g(h(X,Y),\phi Z).
\end{eqnarray*}
Therefore from above equation, we get the result.
\end{proof}
\begin{theorem}
Let $M$ be a totally umbilical hemi-slant submanifold of a cosymplectic manifold $N$. Then at least one of the following holds
\newline
(1)$dim(D^{\perp})$ = 1,
\newline
(2)H $\in \mu $,
\newline
(3)M is proper hemi-slant submanifold.
\end{theorem}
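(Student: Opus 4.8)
The plan is to establish the sharper dichotomy that \emph{either} $\dim(D^{\perp})=1$ \emph{or} $H\in\mu$; the stated three-fold alternative follows at once. By the orthogonal decomposition (\ref{h2}), $T^{\perp}M=F(D^{\perp})\oplus F(D^{\theta})\oplus\mu$, the membership $H\in\mu$ is equivalent to $g(H,FX)=0$ for every $X\in D^{\perp}$ and for every $X\in D^{\theta}$, so I would kill these two components of $H$ in turn.

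\emph{Step 1 (the $F(D^{\theta})$-component always vanishes; this is vacuous if $D^{\theta}=0$).} Fix a local unit section $X$ of $D^{\theta}$. Since $g(X,TX)=g(X,\phi X)=0$, total umbilicity (\ref{a17}) forces $h(X,TX)=0$, while $h(X,X)=H$; substituting $Y=X$ in the second identity of the first Lemma of this section, $h(X,TY)+\nabla^{\perp}_{X}FY=F\nabla_{X}Y+fh(X,Y)$, yields $\nabla^{\perp}_{X}FX=F\nabla_{X}X+fH$. Pairing with $FX$ and using metric compatibility of $\nabla^{\perp}$ together with the slant relation $g(FU,FV)=\sin^{2}\theta\{g(U,V)-\eta(U)\eta(V)\}$ on $D^{\theta}$ (valid since $T^{2}=-\cos^{2}\theta\,(I-\eta\otimes\xi)$ there) makes the first two terms vanish, leaving $g(fH,FX)=0$; rewriting this via (\ref{a1}), (\ref{a3}), (\ref{a9}) and (\ref{a10}) gives $g(tH,TX)=0$. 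Since $\theta\ne\frac{\pi}{2}$, the map $T$ is bijective on $D^{\theta}$, hence $g(tH,Y)=0$ for all $Y\in D^{\theta}$, and therefore $g(H,FX)=-g(\phi H,X)=-g(tH,X)=0$ for every $X\in D^{\theta}$.

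\emph{Step 2 (the $F(D^{\perp})$-component, and the role of $\dim D^{\perp}$).} Suppose $\dim(D^{\perp})\ge2$ and let $0\ne W\in D^{\perp}$; choose a unit section $Z$ of $D^{\perp}$ with $Z\perp W$, so that (\ref{a17}) gives $h(Z,W)=g(Z,W)H=0$. Because $TZ=0$, the first identity of the first Lemma with $X=Y=Z$ reduces to $A_{FZ}Z=-T\nabla_{Z}Z-tH$. Taking the inner product with $W$, the left side equals $g(h(Z,W),FZ)=0$ by (\ref{a7}), while the right side simplifies---using (\ref{a3}), (\ref{a9}), (\ref{a10}) and $TW=0$---to $g(H,FW)$; hence $g(H,FW)=0$, and since $W$ was arbitrary, $H$ has no $F(D^{\perp})$-component. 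With Step 1 this gives $H\in\mu$. If instead $\dim(D^{\perp})\le1$, then either $\dim(D^{\perp})=1$, which is alternative (1), or $\dim(D^{\perp})=0$, in which case $F(D^{\perp})=0$ and Step 1 already yields $H\in\mu$. In every case one of (1),(2) holds---a fortiori one of (1),(2),(3).

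\emph{Expected main obstacle.} The delicate point is Step 2. On $D^{\theta}$ the diagonal choice $Y=X$ works because $T$ is nondegenerate there, but on $D^{\perp}$ one has $T\equiv0$, so the diagonal computation is empty and one is forced to test $A_{FZ}Z$ against a \emph{second} vector of $D^{\perp}$ orthogonal to the first; such a vector exists exactly when $\dim(D^{\perp})\ge2$, and it is precisely this requirement that isolates alternative (1). The rest is bookkeeping: noting $F\xi=0$ (from (\ref{a2})) so that the projection splitting (\ref{h1}) is compatible with $F$; keeping the tangential and normal parts in (\ref{a9})--(\ref{a10}) separate throughout; and confirming that the $F$-length relation may legitimately be invoked on the slant distribution $D^{\theta}$ rather than only on a slant submanifold.
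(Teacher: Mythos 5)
Your proposal is correct, and your Step 2 is in substance the paper's own argument: the authors likewise expand $(\overline{\nabla}_{Z}\phi)Z=0$ for $Z\in D^{\perp}$, compare tangential parts to obtain $A_{FZ}Z+th(Z,Z)=0$ (modulo a $T\nabla_{Z}Z$ term that they drop but which is harmless, since it is orthogonal to $D^{\perp}$), pair with $W\in D^{\perp}$ and use umbilicity to reach $g(Z,W)g(H,FZ)+g(Z,Z)g(tH,W)=0$; your choice of an orthonormal pair $Z\perp W$ is precisely how one extracts $g(H,FW)=0$ when $\dim D^{\perp}\ge 2$, which is what their closing sentence gestures at. Where you genuinely go beyond the paper is Step 1. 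The paper stops after annihilating the $F(D^{\perp})$-component of $H$ and immediately asserts $H\in\mu$, but by the decomposition (\ref{h2}) one must also annihilate the $F(D^{\theta})$-component; the paper never addresses this, so strictly read its computation only yields $H\perp F(D^{\perp})$. Your diagonal computation on $D^{\theta}$ --- using $h(X,TX)=0$, the normal part of the structure equation from the first Lemma of Section 3, and the invertibility of $T$ on $D^{\theta}$ when $\theta\ne\frac{\pi}{2}$ --- supplies exactly the missing piece, and as a bonus upgrades the loose trichotomy to the cleaner dichotomy ``$\dim D^{\perp}=1$ or $H\in\mu$,'' absorbing the degenerate case $D^{\perp}=\{0\}$ that the paper's alternative (3) is evidently meant to capture. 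The only points requiring care, which you already flag, are that the relation $g(FU,FV)=\sin^{2}\theta\, g(U,V)$ must be justified on the slant \emph{distribution} (it follows since $T$ preserves $D^{\theta}$, so the tangential part of $\phi X$ coincides with its $D^{\theta}$-projection), and that $F\xi=0$ together with the orthogonality of $F(D^{\perp})$ and $F(D^{\theta})$ is needed to evaluate $g(F\nabla_{X}X,FX)$.
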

\begin{proof}
For a cosymplectic manifold, we have
\begin{eqnarray*}
(\overline \nabla_{Z}\phi)Z = 0,
\end{eqnarray*}
for any $Z \in D^{\perp}$.
Using (\ref{a5}),(\ref{a6}) and (\ref{a9}), we have
\begin{eqnarray*}
\overline \nabla_{Z}FZ - \phi(\nabla_{Z}Z + h(Z,Z)) = 0.
\end{eqnarray*}
Whereby, we obtain
\begin{eqnarray*}
-A_{FZ}Z + \nabla^{\perp}_{Z}FZ - F\nabla_{Z}Z - th(Z,Z) - nh(Z,Z)= 0.
\end{eqnarray*}
Comparing the tangential components, we have
\begin{eqnarray*}
A_{FZ}Z + th(Z,Z)= 0.
\end{eqnarray*}
Taking inner product with $W \in D^{\perp}$, we obtain
\begin{eqnarray*}
g(A_{FZ}Z + th(Z,Z),W)= 0,
\end{eqnarray*}
or
\begin{eqnarray*}
g(h(Z,W),FZ) + g(th(Z,Z),W) = 0.
\end{eqnarray*}
Since $M$ is totally umbilical submanifold, we obtain
\begin{eqnarray*}
g(Z,W)g(H,FZ) + g(Z,Z)g(tH,W) = 0.
\end{eqnarray*}
The above equation has a solutio if either $dim(D^{\perp})$ = 1 or H $\in \mu $ or $D^{\perp}$ = 0, this completes the proof.
\end{proof}

%\noindent \textbf{Acknowledgements}
{\bf{References}}


\begin{thebibliography}{11}
\markboth{M. S. Lone, M.A. Lone }{Hemi-slant submanifolds of cosymplectic manifolds}
\bibitem{biha1} A. Carriazo,
\newblock New developments in slant submanifold theory,
\newblock \emph{Narosa publisihing house, New delhi, India}, 2002.
\bibitem{biha2} A. Carriazo,
\newblock  Bi-slant immersions,
\newblock \emph{Proc. ICRAMS, Kharagpur, India}, 2000, 88-97.
\bibitem{biha3} A. Lotta,
\newblock  Slant submanifolds in contact geometry,
\newblock \emph{Bull. Math. Soc. Roumanie}, 1996, 39, 183-198.
\bibitem{biha4} B. Sahin,
\newblock  Warped product submanifolds of a Kaehler manifold with a slant factor,
\newblock \emph{Ann. Pol. Math.}, 2009, 95, 107-226.
\bibitem{biha5} B. Y. Chen,
 \newblock Geometry of Slant submanifolds,
\newblock \emph{Katholieke Universitiet Leuven}, 1990.
\bibitem{biha6} G. D. Ludden,
 \newblock Submanifolds of cosymplectic manifolds,
\newblock \emph{J. Differ. Geom.}, 4, 1970.
\bibitem{biha7} J. L. Cabrerizo, A. Carriazo, L. M. Fernandez, M. Fernandez,
\newblock Slant submanifolds in Sasakian manifolds,
\newblock \emph{Glasgow Math. J.}, 2000, 42, 125-138.
\bibitem{biha8} N. Papaghuic,
\newblock  Semi-slant submanifolds of a Kaehlarian manifold,
\newblock \emph{An. St. Univ. Al. I. Cuza. Iasi}, 2009, 40, 55-61.
\bibitem{biha9} R.S.Gupta, S. M. K. Haider, M. H.Shahid
\newblock  Slant submanifolds of Cosymplectic manifolds,
\newblock \emph{An. St. Univ. Al. I. Cuza. Iasi}, 2004, 50, 33-49.
\bibitem{biha10}K. Yano and M. Kon,
\newblock  Structures on Manifolds,
\newblock \emph{World Scientific, Singapore}, 1984.
\bibitem{biha11} V. A. Khan, M. A. Khan,
\newblock  Psuedo-slant submanifolds of a sasakian manifolds,
\newblock \emph{Indian J. Pure. and Applied Math.}, 2000, 38(1) 88-97.
\end{thebibliography}
\end{document}